\renewcommand{\geq}{\geqslant}
\renewcommand{\leq}{\leqslant}
\theoremstyle{plain}
\newtheorem{theorem}{Theorem}
\newtheorem{proposition}[theorem]{Proposition}
\newtheorem{conjecture}[theorem]{Conjecture}
\newtheorem{lemma}[theorem]{Lemma}
\newtheorem{corollary}[theorem]{Corollary}
\newtheorem{observation}[theorem]{Observation}
\newcommand{\floor}[1]{\ensuremath{\protect\left\lfloor#1\right\rfloor}}
\newcommand{\ceil}[1]{\ensuremath{\protect\left\lceil#1\right\rceil}}
\DeclareMathOperator{\N}{\mathbb{N}}
\newcommand{\bc}{\begin{center}}
\newcommand{\ec}{\end{center}}
\begin{document}

\title{Bichromatic lines in the plane}
\date{\today}

\author[]{Michael~S.~Payne}
\address{
\newline Computer Science Department 
\newline Universit\'e Libre de Bruxelles
\newline Belgium}
\email{mpayne@ulb.ac.be}


\begin{abstract} Given a set of red and blue points in the plane, a bichromatic line is a line containing at least one red and one blue point.
We prove the following conjecture of Kleitman and Pinchasi (unpublished, 2003). 
Let $P$ be a set of $n$ red, and $n$ or $n-1$ blue points in the plane. If neither colour class is collinear, then $P$ determines at least $|P|-1$ bichromatic lines.
In fact we are able to achieve the same conclusion under the weaker assumption that $P$ is not collinear or a near-pencil.
\end{abstract}

\maketitle

\section{Introduction}

In this paper we consider sets of red and blue points in the Euclidean plane.
If $P$ is such a set, a line containing two or more points of $P$ is said to be \emph{determined} by $P$.
A line determined by at least one red and one blue point is called \emph{bichromatic}.

In 2003, Kleitman and Pinchasi~\cite{kleitpinch} studied lower bounds on the number of bichromatic lines under the assumption that neither colour class is collinear. They made the following conjecture.

\begin{conjecture}[Kleitman--Pinchasi Conjecture]\label{KPconj}
Let $P$ be a set of $n$ red, and $n$ or $n-1$ blue points in the plane. If neither colour class is collinear, then $P$ determines at least $|P|-1$ bichromatic lines.
\end{conjecture}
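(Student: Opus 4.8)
The plan is to prove the stronger statement obtained by replacing ``neither colour class is collinear'' with ``$P$ is not collinear and not a near-pencil'' (retaining the size condition on the colour classes); the conjecture is then the special case, since a set with neither colour class collinear is automatically neither collinear nor a near-pencil. The argument is an induction on $|P|$. Its engine is the remark that if a point $p$ is the only point of its colour lying on some bichromatic line $\ell$, then in $P\setminus\{p\}$ the line $\ell$ has become monochromatic (or lost a point), while any line bichromatic in $P\setminus\{p\}$ is still bichromatic in $P$; hence $P$ determines at least one more bichromatic line than $P\setminus\{p\}$ does, and the induction closes the step provided $P\setminus\{p\}$ still satisfies the hypothesis. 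The point of using the weakened hypothesis is exactly that it survives deletion: were $P\setminus\{p\}$ collinear, $P$ would be a near-pencil, which is barred, so the only obstruction is that $P\setminus\{p\}$ is itself a near-pencil --- a degeneracy handled separately below. (To keep the size condition one deletes $p$ from the larger colour class, so that $P\setminus\{p\}$ again has one class of size $m$ and the other of size $m$ or $m-1$; the existence lemma below must accordingly deliver such a $p$ of that class.)

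So everything hinges on an existence lemma: unless $P$ is a near-pencil (or one of finitely many small configurations), some bichromatic line meets one of the colour classes in exactly one point. The friendliest witness is a bichromatic \emph{ordinary} line --- two points, one of each colour --- either of whose endpoints works. Call $P$ \emph{rich} if, to the contrary, every bichromatic line contains at least two red and at least two blue points (so in particular every line of $P$ on at most three points is monochromatic). I would show that a non-collinear configuration is never rich, or, failing that, that a rich one already determines at least $|P|-1$ bichromatic lines by itself. To exclude richness I would mimic the Kelly--Moser proof of the Sylvester--Gallai theorem: among all pairs $(q,\ell)$ with $\ell$ a bichromatic line and $q\in P\setminus\ell$ (such a pair exists because $P$ is neither collinear nor monochromatic), take one minimising the distance from $q$ to $\ell$; since a bichromatic line of a rich configuration carries at least four points of $P$, one side of the foot of the perpendicular from $q$ holds at least two of them, and the usual perpendicular-distance estimate places a point of $P$ strictly nearer to a line spanned by $q$ and a point of $\ell$ --- which contradicts minimality if that spanned line is bichromatic. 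The Motzkin--Rabin theorem, supplying a monochromatic line, is what one uses to control the colours of $P$ along $\ell$ near the foot and hence force the spanned line to be bichromatic.

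Granted the lemma, the induction runs smoothly. Pick a bichromatic line $\ell$ meeting the larger (say red) class in the single point $r$; if $P\setminus\{r\}$ is not a near-pencil, apply the inductive hypothesis. If $P\setminus\{r\}$ is a near-pencil, then, $P$ not being one, $P$ consists of $|P|-2$ collinear points together with two further points off their line; these ``double near-pencils'' form a short explicit family, and a direct count confirms at least $|P|-1$ bichromatic lines, sometimes exactly $|P|-1$, so they are among the extremal configurations. The finitely many small base configurations are checked by hand.

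The rich case is where I expect the real difficulty. Plain double counting falls short --- from $\sum_{\ell\text{ bichr.}}\rho_\ell\beta_\ell=|R|\,|B|$ with every $\rho_\ell\ge 2$ and $\beta_\ell\ge 2$ one only gets a bound of order $|P|/2$ --- and linear-algebraic rank bounds on the line--point incidence matrix are likewise too weak, since a point lying on only one bichromatic line contributes to the kernel and so depresses the rank well below $|P|-1$; thus the planar geometry is essential. Within the Kelly--Moser step the delicate part is the colour bookkeeping: the distance estimate yields a contradiction only when the line spanned by $q$ and a point of $\ell$ is genuinely bichromatic and the witnessing point of $P$ lies off it, so one must track precisely which of the two points of $P$ nearest the foot on each side of $\ell$ is red and which is blue. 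By comparison the near-pencil degeneration of the induction and the bookkeeping for the size condition are routine.
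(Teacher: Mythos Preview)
Your inductive framework coincides with the paper's: both reduce to a minimal counterexample in which no bichromatic line meets the larger colour class in a single point (your ``rich'' configurations; the paper's Lemma that $s_{1,j}=0$, and $s_{i,1}=0$ in the balanced case). The treatment of the near-pencil degeneration under deletion and the class-size bookkeeping are likewise the same.

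The gap is the existence lemma. The Kelly--Moser adaptation does not close: with $(q,\ell)$ the closest point--bichromatic-line pair and $q$ red, you need the spanned line $\overline{qb}$ (for $b\in\ell$ far from the foot, with some $a\in\ell$ strictly between) to be bichromatic. But nothing forbids the following pattern on $\ell$: the foot is not in $P$, one side carries a single point, and on the crowded side the point nearest the foot is blue while all the farther ones are red. Then every candidate $\overline{qb}$ with a witness $a$ between $b$ and the foot has both endpoints red and may be monochromatic, while the two bichromatic lines $\overline{qc_1}$ and $\overline{q\,b_A}$ have no point of $P$ between their $\ell$-endpoint and the foot. Invoking Motzkin--Rabin does not help: it furnishes \emph{some} monochromatic line globally, but says nothing about the colour pattern along this particular $\ell$, which is what you need. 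No Sylvester--Gallai-style argument is known to dispose of the rich case, and your proposal does not supply one.

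The paper's route at this point is entirely different and substantially heavier. From $s_{1,1}=0$ it combines Melchior's inequality with pair-counting to show a rich minimal counterexample has $n\le 20$; then an integer linear program incorporating Melchior, Hirzebruch, and several ad hoc incidence constraints rules out every $n\le 20$ except $(8,7)$ and $(6,5)$; those two are eliminated by explicit projective-coordinate arguments. So the ``rich'' case is precisely where the difficulty lies, and the proof there is computational and case-based rather than of the elementary geometric type you sketch.
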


This conjecture is tight for the arrangement of $n-1$ red and $n-1$ blue points on a line, along with one red and one blue point off the line, and collinear with some point on the line.


In 1948, de Bruijn and Erd\H os~\cite{BE48} proved that every non-collinear set of $n$ points in the plane determines at least $n$ lines.
In fact, they proved this result in a more general combinatorial setting. 

\begin{theorem}[de Bruijn and Erd\H os]\label{dBErd}
Let $S$ be a set of cardinality $n$ and $\{ S_1, \ldots, S_k \}$ a collection of subsets of $S$ such that each pair of elements in $S$ is contained in exactly one $S_i$. Then either $S=S_i$ for some $i$, or $k\geq n$.
\end{theorem}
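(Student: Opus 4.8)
The plan is to run the classical double-counting argument, viewing $S$ as a set of ``points'' and $S_1,\dots,S_k$ as ``lines''. Small cases being immediate, I assume $n\ge 3$ and suppose, towards a contradiction, that the conclusion fails: no $S_i$ equals $S$, and $k\le n-1$. For a point $p\in S$ let $r_p$ denote the number of sets $S_i$ containing $p$. Then $r_p\le k\le n-1$, and $|S_i|\le n-1$ for every $i$ (equality would force $S_i=S$), so every quantity $n-r_p$ and $n-|S_i|$ occurring below is at least $1$; also $r_p\ge1$, since $p$ lies in some set (any set containing a pair through $p$).

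The only combinatorial input is the estimate: \emph{if $p\notin S_i$, then $r_p\ge |S_i|$}. To prove it, associate to each $q\in S_i$ the unique member $S_{j(q)}$ of the collection containing the pair $\{p,q\}$. The map $q\mapsto j(q)$ is injective, for if $S_{j(q)}=S_{j(q')}$ with $q\ne q'$, this common set would contain both $q$ and $q'$ and hence coincide with the unique set through $\{q,q'\}$, namely $S_i$ --- impossible, since $p\in S_{j(q)}$ while $p\notin S_i$. As every $S_{j(q)}$ contains $p$, the number of sets containing $p$ is at least $|S_i|$.

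Now, whenever $p\notin S_i$, the inequality $r_p\ge|S_i|$ rearranges (legitimately, since the denominators are positive) to $\dfrac{r_p}{n-r_p}\ge\dfrac{|S_i|}{n-|S_i|}$. Summing this over all non-incident pairs $(p,S_i)$ and evaluating each side --- a point $p$ is missed by $k-r_p$ of the sets, a set $S_i$ is missed by $n-|S_i|$ of the points, and $\sum_i|S_i|=\sum_p r_p$ by counting incidences --- yields
\[
\sum_{p\in S}(k-r_p)\,\frac{r_p}{n-r_p}\;\ge\;\sum_{i=1}^{k}|S_i|\;=\;\sum_{p\in S}r_p .
\]
Subtracting $\sum_p r_p$ from both sides, the left-hand side becomes $\sum_p r_p\bigl(\tfrac{k-r_p}{n-r_p}-1\bigr)=(k-n)\sum_p\tfrac{r_p}{n-r_p}$, and since every term $\tfrac{r_p}{n-r_p}$ is strictly positive this forces $k\ge n$, contradicting $k\le n-1$. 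Hence either $S_i=S$ for some $i$, or $k\ge n$.

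I expect the genuinely delicate point to be not any single step but the bookkeeping that keeps all denominators positive: this is precisely why the two failure hypotheses ``$S_i\ne S$ for every $i$'' and ``$k\le n-1$'' must be placed in force \emph{before} the summation, so that $r_p\le n-1$ and $|S_i|\le n-1$. Alternatives I would keep in reserve are an induction on $n$ (delete a point lying on at least two of the sets and track how the sets through it merge or disappear) and the Motzkin-style linear-algebra argument that the point--set incidence matrix has rank $n$; but the double count above is the shortest and most self-contained, so that is the route I would follow.
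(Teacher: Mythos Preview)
Your proof is correct; it is the classical double-counting (Conway/Motzkin) argument for the de Bruijn--Erd\H os theorem, and every step is sound, including the care you take to keep all denominators positive.

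There is, however, nothing in the paper to compare it to: the paper does not prove Theorem~\ref{dBErd}. It is quoted as a known result from~\cite{BE48}, with only the side remark that the \emph{plane} special case (points and their spanned lines) follows by induction from the Sylvester--Gallai theorem. Your argument proves the full combinatorial statement directly, which is strictly more than the paper discusses; the paper's one-line sketch for the plane case goes via an ordinary-line/induction route rather than double counting, but since no actual proof is written out there, the comparison ends there.
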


As noted by de Bruijn and Erd\H os, the special case where $S$ is a set of points in the plane and the $S_i$ are the collinear subsets of $S$ is easier to prove than the general theorem.
It follows by induction from the well-known Sylvester-Gallai Theorem (actually first proven by Melchior~\cite{Melchior-DM41}), which says that every finite non-collinear set of points in the plane determines a line with just two points.
As motivation, Kleitman and Pinchasi note that together with the following theorem of Motzkin~\cite{motzkinnonmixed}, Conjecture~\ref{KPconj} would imply the plane case of Theorem~\ref{dBErd}.

\begin{theorem}[Motzkin]
Every non-collinear set of red and blue points in the plane determines a monochromatic line. 
\end{theorem}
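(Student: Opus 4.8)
The statement is the Motzkin--Rabin theorem, and the plan is a Sylvester--Gallai style extremal argument made sensitive to the colouring. First I would dispose of the easy cases. If only one colour occurs, every connecting line is vacuously monochromatic. If $P$ is a near-pencil --- at least two collinear points of $P$, say on a line $\ell_0$, together with one further point $p \notin \ell_0$ --- then either those collinear points share a colour, so $\ell_0$ is monochromatic, or $\ell_0$ carries both colours, in which case some point $x$ of $P$ on $\ell_0$ has $p$'s colour and the two--point line through $p$ and $x$ is monochromatic. So assume henceforth that both colours occur, that $P$ is neither collinear nor a near-pencil, and, for contradiction, that every line determined by $P$ contains a point of each colour.

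The goal is to produce a point of $P$ strictly closer to some determined line than an extremally chosen incidence allows. Consider all pairs $(v,\ell)$ with $v \in P$, $\ell$ a line determined by $P$, and $v \notin \ell$; this family is nonempty since $P$ is not collinear. I would choose such a pair with $d(v,\ell)$ minimum, breaking ties by a colour-aware secondary invariant (for instance, among the minimisers, minimise the distance from the foot $f$ of the perpendicular from $v$ to $\ell$ to the nearest point of $P \cap \ell$ having the colour opposite to $v$). Following the classical proof of the Sylvester--Gallai theorem --- put $\ell$ on the $x$-axis with $f$ at the origin --- if some point of $P \cap \ell$ lies strictly between $f$ and another point of $P \cap \ell$, a one-line computation shows this middle point is strictly nearer to the line joining $v$ to the outer point than $v$ is to $\ell$, contradicting minimality. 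Hence $P \cap \ell$ has at most one point on each open side of $f$.

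Now I would use that $\ell$ is bichromatic to pick $q \in P \cap \ell$ of the colour opposite to $v$, and pass to the connecting line $vq$. A second short computation shows that any point of $P \cap \ell$ lying on the half-open segment from $f$ towards $q$ --- including $f$ itself, should $f \in P$ --- is strictly closer to $vq$ than $v$ is to $\ell$, again a contradiction. If no such point exists, the previous step forces $P \cap \ell$ to be exactly $\{q, q'\}$ with $q'$ on the far side of $f$ and, by bichromaticity, of $v$'s colour; the line $vq'$ then carries two points of $v$'s colour, hence contains a point $u$ of the opposite colour, and the position of $u$ relative to $\ell$ is pinned down by minimality of $d(v,\ell)$. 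One then tries to arrange that $q'$ lies strictly between the perpendicular foot of $u$ on $\ell$ and $q$, so that the Sylvester--Gallai move applied to the pair $(u,\ell)$ finishes; where this fails one falls back on the secondary invariant, or iterates, feeding the monochromatic-looking line through $u$ and $q$ into the hypothesis.

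The step I expect to be the main obstacle is this last one. Plain distance minimisation is colour-blind --- the ordinary line it delivers may be bichromatic --- so one gains nothing without a colour-aware ingredient, and the real content is in choosing the secondary invariant so that the colour-sensitive move strictly decreases it in \emph{every} configuration that can arise on the minimising line, including the degenerate ones (the foot $f$ a point of $P$; all opposite-coloured points of $P \cap \ell$ on one side of $f$; lines carrying only two or three points of $P$), where the generic move collapses and a separate argument through the hypothesis is needed.
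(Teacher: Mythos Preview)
The paper does not prove this theorem at all: Motzkin's result is quoted as background (with a citation to \cite{motzkinnonmixed}) and used only to motivate Conjecture~\ref{KPconj}. So there is no ``paper's own proof'' to compare against.

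As for your proposal itself, it is not a proof but a plan with an acknowledged hole, and the hole is real. Kelly's distance-minimisation argument is, as you say, colour-blind: what it actually delivers is an \emph{ordinary} line $\ell$ (one meeting $P$ in exactly two points) together with a nearest off-line point $v$. From the bichromaticity hypothesis you then know $\ell=\{q,q'\}$ with $q,q'$ of opposite colours; say $q'$ shares $v$'s colour. Passing to the line $vq'$ and invoking bichromaticity produces a point $u$ of the opposite colour on $vq'$, but at this stage you have lost all control: nothing forces $u$ to sit where the Sylvester--Gallai move on the pair $(u,\ell)$ would bite, and nothing prevents the process from cycling. Your ``colour-aware secondary invariant'' is never specified, and the suggestion to ``minimise the distance from $f$ to the nearest opposite-coloured point of $P\cap\ell$'' does not obviously decrease under the moves you describe. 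In short, the content of the theorem lies precisely in the step you flag as the obstacle, and you have not supplied it.

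For orientation: the standard proofs of the Motzkin--Rabin theorem do \emph{not} proceed by a coloured refinement of Kelly's metric argument. One clean route passes to the projective dual and uses Euler's formula (as in Melchior's proof of Sylvester--Gallai); another, more elementary, argues via an extremal connecting line through a fixed point and a careful case analysis on the colour pattern along it. If you want a self-contained proof, either of these is a safer template than trying to force the distance argument to respect colours.
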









Kleitman and Pinchasi~\cite{kleitpinch} came very close to proving Conjecture~\ref{KPconj}, establishing the following theorem.
\begin{theorem}[Kleitman and Pinchasi]\label{KPthm}
Let $P$ be a set of $n$ red, and $n$ or $n-1$ blue points in the plane. If neither colour class is collinear, then $P$ determines at least $|P|-3$ bichromatic lines.
\end{theorem}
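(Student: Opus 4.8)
The plan is to argue by strong induction on $|P|$, with Theorem~\ref{dBErd} (in its elementary planar form, applied to the red set $R$, to the blue set $B$, and to $P$ itself) as the quantitative backbone. Write $x$ and $y$ for the numbers of monochromatic red and monochromatic blue lines, and $T$ for the number of bichromatic lines. Every line spanned by $R$ is either monochromatic red or carries a blue point, so the bichromatic lines carrying at least two red points number at least $|R|-x$; symmetrically for blue; and the lines spanned by $P$ give $x+y+T\ge|P|$. These facts already pin $T$ down to within a constant of $|P|$ except when $L(R)$ or $L(B)$ is close to its minimum in Theorem~\ref{dBErd}, i.e.\ when $R$ or $B$ is close to a near-pencil --- which is exactly the shape of the tight configuration described after Conjecture~\ref{KPconj}, so this is where the real work lies.

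For the inductive step I would delete a single well-chosen point and apply the hypothesis to the smaller set (after relabelling colours if convenient). Deleting a red point $r$ destroys precisely the bichromatic lines on which $r$ is the \emph{only} red point; call these the private lines of $r$. Since each bichromatic line with exactly one red point is private to a unique red point, $\sum_{r\in R}(\#\,\text{private lines of }r)$ equals the number of bichromatic lines with exactly one red point, which is at most $T$; so, provided $T<|P|$ (otherwise the desired bound is immediate), an averaging argument produces a point of the larger colour class incident to at most one private bichromatic line. One then deletes such a point --- choosing it, when possible, incident to \emph{exactly} one private line and so that the smaller set still has both colour classes non-collinear and still obeys the size constraint (delete from the larger class in the unbalanced case, so that $n,n-1$ becomes $n-1,n-1$ rather than $n,n-2$) --- so that $T$ drops by exactly $1$, matching the drop of $|P|-3$, and the induction closes. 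Small point sets form the base cases.

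The main obstacle is the family of configurations in which no admissible deletion of this kind exists: every point lies on zero or on at least two private bichromatic lines, or every point incident to exactly one private line has a deletion that makes a colour class collinear or unbalances the sizes. I would argue that such a configuration is forced to be highly structured --- most of $R$ and most of $B$ lying on one common line, with only a bounded number of points off it, mirroring the tight example --- invoking Motzkin's theorem to produce a monochromatic line to anchor the structural analysis, and then verifying $T\ge|P|-3$ by direct inspection of the resulting bounded list of shapes. It is here that the gap between the conjectured constant $1$ and the weaker constant $3$ is spent: the two extra lines give precisely the slack needed to close this case analysis without a complete classification of the extremal configurations.
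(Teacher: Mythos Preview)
Your plan and the Kleitman--Pinchasi argument (reproduced in Section~3) share their opening move but part ways at exactly the decisive point. Both start from a minimal counterexample and both notice that deleting a red point $r$ destroys precisely the bichromatic lines on which $r$ is the unique red point. The paper uses this observation once, to deduce that in a minimal counterexample $s_{1,j}=0$ for every $j$ (Lemma~\ref{1redlem}): if some bichromatic line carried a single red point, removing that point would yield a smaller counterexample. After that the argument is not inductive in your sense at all. The consequence $s_{1,1}=0$ forces every ordinary line to be monochromatic, so Melchior's inequality becomes $s_{2,0}+s_{0,2}\ge 3+\sum_{k\ge 3}(k-3)t_k$; this is combined with $S-D\le 1-n$ and $D=\sum ij\,s_{i,j}$ to obtain the inequality~(\ref{eqn2}), and an optimisation over the ratio~(\ref{ratio}) then shows the constraints are infeasible for $n$ large. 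Small $n$ are finished off separately.

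Your plan instead tries to iterate the deletion. The gap is in your ``main obstacle'' paragraph. When no red point has exactly one private line --- in particular when \emph{every} red point has zero private lines, which is to say $s_{1,j}=0$ for all $j$ --- you claim the configuration must be ``highly structured'', with most of $R$ and $B$ on one common line. This is backwards: the condition $s_{1,j}=0$ is not a degenerate side case but is exactly what Lemma~\ref{1redlem} says \emph{always} holds in a minimal counterexample, and it imposes no collinearity on $P$ whatsoever. The entire substance of the Kleitman--Pinchasi proof --- Melchior, the $S-D$ bookkeeping, the optimisation bounding~(\ref{ratio}) --- lives inside this case, and nothing in your outline replaces it. Your de~Bruijn--Erd\H os inequalities $T\ge |R|-x$, $T\ge |B|-y$, $x+y+T\ge |P|$ do not help: they are mutually consistent with $T$ arbitrarily small (take $x,y$ large), and you have no independent bound on $x$ or $y$. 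Motzkin's theorem gives you one monochromatic line, not a long one. Without Melchior or a tool of comparable strength there is no mechanism in your plan to exclude a minimal counterexample with $s_{1,j}=0$ and $T=|P|-4$, so the induction does not close.
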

Purdy and Smith~\cite{purdysmith} proved Conjecture~\ref{KPconj} for $n\geq 79$.
We will establish the following strengthening of Conjecture~\ref{KPconj}.
\begin{theorem}\label{mainthm}
Let $P$ be a set of $n$ red, and $n$ or $n-1$ blue points in the plane. If $P$ is not collinear or a near-pencil,
then $P$ determines at least $|P|-1$ bichromatic lines.
\end{theorem}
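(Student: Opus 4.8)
The plan is to argue by induction on $|P|$, deleting a carefully chosen point and controlling how many bichromatic lines this destroys. Order the colours so that $|R|\ge|B|$, hence $|R|=n$ and $|B|\in\{n-1,n\}$; deleting a red point is always compatible with the permitted shape, and deleting a blue point is too when $|R|=|B|$. Since $P$ is neither collinear nor a near-pencil, no line of $P$ contains more than $|P|-2$ points.

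First I would dispose of the near-extremal configurations in which some line $\ell_0$ carries exactly $|P|-2$ points, leaving two points $u,v$ off $\ell_0$. This family is governed by very few parameters: the numbers of red and blue points on $\ell_0$, the colours of $u$ and $v$, and whether the line $uv$ meets $\ell_0$ at a point of $P$. The bichromatic lines here are $\ell_0$ itself (when bichromatic) together with the joins of $u$, resp.\ of $v$, to the points of the opposite colour on $\ell_0$, and a direct count — allowing for the single possible coincidence among these joins, namely when $uv$ passes through a point of $P$ on $\ell_0$ — gives at least $|P|-1$ bichromatic lines, with equality precisely in the arrangements described in the abstract. Hence we may assume from now on that every line of $P$ has at most $|P|-3$ points; this has the convenient effect that deleting \emph{any} single point of $P$ leaves a set that is again neither collinear nor a near-pencil.

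Now call a bichromatic line \emph{red-thin} if it contains a unique red point, and define \emph{blue-thin} symmetrically. If a red point $r$ lies on a red-thin line $\ell$, then $P\setminus\{r\}$ is still a valid set, and it has at least one fewer bichromatic line than $P$, since $\ell$ retains its blue points but loses its only red point and so is no longer bichromatic; the induction hypothesis then gives $P$ at least $(|P|-1-1)+1=|P|-1$ bichromatic lines. The same works for a blue point on a blue-thin line when $|R|=|B|$. So it remains to produce such a thin line: a red-thin \emph{or} blue-thin line when $|R|=|B|$, and a red-thin line when $|R|=|B|+1$. The first is a Sylvester--Gallai-type statement, which I would prove by a distance-minimisation argument in the spirit of Kelly's proof of the Sylvester--Gallai theorem, run over pairs consisting of a point and a determined line that is rich in the other colour; this shows that every non-collinear two-coloured point set with both colour classes non-empty has a red-thin or a blue-thin line, which settles the balanced case.

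The hard part is the case $|R|=n$, $|B|=n-1$ in which there is \emph{no} red-thin line, i.e.\ every bichromatic line contains at least two red points: here a blue-thin line (which the structural lemma still supplies) is useless, and the condition is genuinely compatible with non-degenerate configurations — for instance a quadrilateral together with its three diagonal points. In this situation the bichromatic lines are precisely the lines spanned by $R$ that meet $B$; the set $R$ is forced to be non-collinear, so by Theorem~\ref{dBErd} it spans at least $n$ lines, and since the lines spanned by $R$ through any fixed blue point partition $R$ into at least two parts, every blue point lies on at least two bichromatic lines. This yields $\sum_{\ell}|\ell\cap B|\ge 2(n-1)$, the sum over bichromatic lines $\ell$, and the remaining task is to upgrade this to at least $2(n-1)=|P|-1$ \emph{distinct} bichromatic lines, by showing that the $n-1$ blue points cannot be too concentrated on a few lines — using the exclusion of long lines together with a short list of degenerate sub-cases such as $B$ being collinear. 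This spreading estimate is the crux, and the step I expect to be the main obstacle.
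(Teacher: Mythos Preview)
Your reduction is exactly the paper's: Corollary~\ref{easycoro} disposes of the case of $|P|-2$ collinear points, and Lemma~\ref{1redlem} is precisely your thin-line deletion step, showing that a minimal counterexample has $s_{1,j}=0$ for all $j$ (and, in the balanced case, $s_{i,1}=0$ as well). So up through ``it remains to produce such a thin line'' you and the paper agree completely.

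The gap is in how you propose to finish. In the balanced case you assert that a Kelly-style distance-minimisation argument will produce a red-thin or blue-thin line in \emph{every} non-collinear two-coloured set. This is not a known Sylvester--Gallai variant, and I do not see how to make a Kelly argument go through: the difficulty is that after dropping the perpendicular you need the nearer line to again be bichromatic (or at least to contain a point of the right colour), and there is no mechanism forcing this. The paper does not attempt any such argument. Instead it assumes no thin lines and combines Melchior's inequality with the identity $S-D\le 1-n$ to obtain inequality~(\ref{eqn2}); optimising this against the constraint $\sum ij\,s_{i,j}\ge n(n-1)$ yields a contradiction for all $n\ge 21$. For $n\le 20$ the paper runs an integer linear program with Hirzebruch's inequality, Lemma~\ref{imp21}, Lemma~\ref{tricky} and several further constraints, leaving only the two cases $(n,n-1)=(8,7)$ and $(6,5)$, which are then eliminated by explicit projective-geometric arguments (Proposition~\ref{66prop}). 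None of this is replaced by a Kelly-type lemma.

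In the unbalanced case you correctly identify that the absence of red-thin lines forces every bichromatic line to be spanned by $R$, and your incidence inequality $\sum_{\ell}|\ell\cap B|\ge 2(n-1)$ is fine; but, as you yourself say, the ``spreading estimate'' converting this into $2n-2$ distinct bichromatic lines is the crux, and you have not supplied it. Your own example of the complete quadrilateral with its diagonal points shows that the blue points \emph{can} be heavily concentrated (there each blue point lies on exactly two bichromatic lines and equality holds throughout), so the margin is zero and any argument here must be tight. The paper does not separate this case out; the same Melchior/LP machinery handles the balanced and unbalanced cases simultaneously. In short, your outline correctly locates the difficulty but does not resolve it, and the paper's resolution is of a quite different, more computational character.
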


\section{Preliminaries}

We begin with a few useful observations.

\begin{lemma}\label{imp21}
Suppose $P$ is a set of $n$ red and $n$ (or $n-1$) blue points, and suppose there is a line $L$ with $r$ red and  $b$ blue points. Let $r'= \min \{ n-r,b \}$ and $b'= \min \{ n-b,r \}$ (or $b'= \min \{ n-1-b,r \}$). Then the number of bichromatic lines is at least $$ \sum_{i=0}^{r'-1} b-i + \sum_{i=0}^{b'-1} r -i = br' - \frac{1}{2}r'(r'-1) + rb'-\frac{1}{2}b'(b'-1) \enspace .$$
Moreover, if $b+r < n$, then $r'=b$, $b'=r$ and the number of bichromatic lines is at least $(b^2 + b + r^2 +r)/2 $.
If $L$ is itself bichromatic we may add one more to these totals. 
\end{lemma}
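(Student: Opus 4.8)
The plan is to count two \emph{disjoint} families of bichromatic lines and then, if $L$ is bichromatic, add $L$ itself (which lies in neither family, since every line other than $L$ meets $L$ in at most one point). The first family consists of lines through a red point off $L$ and a blue point on $L$; the second, of lines through a blue point off $L$ and a red point on $L$. The closed-form expression is then just the evaluation of the two sums via $\sum_{i=0}^{m-1}(c-i) = cm - \tfrac12 m(m-1)$.

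To bound the first family, let $B$ be the set of blue points on $L$, so $|B| = b$, and list the $n-r$ red points not on $L$ as $q_1, q_2, \ldots$ in any order. For each $i$, the $b$ lines joining $q_i$ to the points of $B$ are bichromatic and pairwise distinct (two coinciding would place $q_i$ on $L$). I claim $q_i$ contributes at least $b - (i-1)$ lines not already obtained from $q_1, \ldots, q_{i-1}$: a line through $q_i$ and a point of $B$ that also passes through some $q_j$ with $j < i$ is forced to equal the line through $q_i$ and $q_j$, so at most $i-1$ of the $b$ lines through $q_i$ are repeats. Summing over $i = 1, \ldots, r'$ is legitimate because $r' \le n-r$ (there really are that many red points off $L$) and $r' \le b$ (so each summand $b-(i-1) \ge b - (r'-1) \ge 1$ is a genuine new line), giving at least $\sum_{i=0}^{r'-1}(b-i)$ lines in the first family. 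The same argument with the colours interchanged — now using the $n-b$ (resp.\ $n-1-b$) blue points off $L$ and the $r$ red points on $L$ — yields at least $\sum_{i=0}^{b'-1}(r-i)$ lines in the second family.

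It remains to see the two families are disjoint. A line $M$ in the first family contains a point off $L$, so $M \ne L$ and $M \cap L$ is a single point, which is blue. If $M$ also lay in the second family it would contain a red point of $L$, forcing the unique point of $M \cap L$ to be both red and blue — impossible. Hence the two lower bounds add, and $1$ may be added when $L$ is bichromatic. For the ``moreover'' clause, $b + r < n$ gives $n - r > b$, so $r' = b$, and $n - b > r$ as well as $n - 1 - b \ge r$ (the latter being exactly $b + r \le n-1$), so $b' = r$; substituting and simplifying gives $(b^2 + b + r^2 + r)/2$.

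I do not expect a serious obstacle: this is a counting lemma and the arithmetic is routine. The two points that need care are the disjointness of the two families (handled above by the observation that a line $\ne L$ hits $L$ once) and the degenerate cases where one colour class lies entirely on $L$, where the corresponding value $r'$ or $b'$ is $0$ and that family is empty — consistent with the stated formula.
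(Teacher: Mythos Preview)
Your proof is correct and follows essentially the same approach as the paper's: split the bichromatic lines (other than $L$) into those meeting $L$ in a blue point versus a red point, and count each family by fixing off-$L$ points one at a time, noting that the $i$-th such point can have at most $i-1$ of its lines to the opposite colour on $L$ already counted. Your write-up is in fact more detailed than the paper's (you spell out why the families are disjoint and why the ``moreover'' clause holds in both the $n$ and $n-1$ blue cases), but the underlying argument is identical.
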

\begin{proof}
The bichromatic lines with a red point on $L$ are distinct from those with a blue point. To count those with a red point, take any $b'$ blue points not on $L$. Order these blue points $p_1, p_2, p_3,\ldots, p_{b'}$. There are $r$ lines from $p_1$ to the red points on $L$. For $p_2$ there are also $r$ such lines, but $p_1$ may lie on one of them (but not more). So there are $r-1$ lines that were not yet counted. Similarly, for $p_3$ there are at least $r-2$ lines that are not counted previously, and for $p_i$ there are $r-i+1$.
\end{proof}

\begin{proposition}\label{propcoll} There is no counterexample to Theorem~\ref{mainthm} with one colour class collinear.
\end{proposition}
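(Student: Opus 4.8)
The plan is to apply Lemma~\ref{imp21} to the line $L$ carrying the collinear colour class; the real work is some bookkeeping for the asymmetry between the colours, plus one exceptional small configuration done by hand. First clear the trivial range: a counterexample to Theorem~\ref{mainthm} has $|P|\ge 4$, since any set of at most three points is collinear or a near-pencil, so $n\ge 2$, both colour classes have at least two points, and $L$ is well defined. As $P$ is not a near-pencil, at least two points lie off $L$; since the collinear class lies entirely on $L$, these belong to the other class. Write $q\ge 2$ for their number and $m$ for the number of points of that same class on $L$.

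\textbf{Case 1: $L$ carries all $n$ red points.} Apply Lemma~\ref{imp21} with $r=n$, $b=m$. Then $r'=0$, while $b'$ is the number of blue points off $L$, namely $q$, so the lemma gives at least $nq-\binom{q}{2}$ bichromatic lines, and one more when $m\ge 1$, i.e.\ when $L$ is itself bichromatic. Splitting according to whether there are $n$ or $n-1$ blue points and whether $m=0$ or $m\ge 1$, a routine computation using $q\ge 2$ shows this is always at least $|P|-1$; so no counterexample has its red class on $L$.

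\textbf{Case 2: $L$ carries all the blue points.} Now $b=s$ is the total number of blue points, so $s\in\{n-1,n\}$, and $r=m$; Lemma~\ref{imp21} gives at least $\sum_{i=0}^{\min\{q,s\}-1}(s-i)$ bichromatic lines, again with one more when $m\ge 1$. If $q\le s$ this sum is $sq-\binom{q}{2}$, and the computation of Case~1 again yields at least $|P|-1$. The bound can fall short only when $q>s$; since $q\le n$ this forces $q=n$, so $m=0$ and $s=n-1$, meaning all $n$ red points lie off $L$ and there are $n-1$ blue points. Then the estimate degrades to $\binom{n}{2}$, which is at least $|P|-1=2n-2$ precisely when $n\ge 4$, while $n=2$ would give $|P|=3$. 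So the only configuration not yet excluded is $n=3$ with two blue points on $L$ and three red points off it.

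To finish, handle that configuration directly. Since $L$ is the unique line through the two blue points and contains no red point, every bichromatic line contains exactly one blue point, so the bichromatic lines are exactly the lines $\overline{RB}$ with $R$ red and $B$ blue---six incident pairs. Two such pairs determine one line only when two red points are collinear with a blue point, and for each of the two blue points at most one such coincidence can occur, since a second would put all three red points and that blue point on a line with the other blue point off it, a near-pencil. Hence at most two coincidences occur and $P$ determines at least $6-2=4=|P|-1$ bichromatic lines, a contradiction. I expect this last step to be the main obstacle: Lemma~\ref{imp21} is tight, or off by one, in several sub-cases, and in the exceptional configuration it misses exactly one line, so that line has to come from the full not-a-near-pencil hypothesis rather than from non-collinearity alone---which is also why the colour asymmetry matters, forcing the two cases and making Case~2 the delicate one.
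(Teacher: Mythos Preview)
Your argument is correct and follows the same strategy as the paper: use the counting idea behind Lemma~\ref{imp21} on the line $L$ carrying the collinear class, split according to which colour is collinear and whether there are $n$ or $n-1$ blue points, and then treat the leftover configuration ($n=3$, two blue points on $L$, three reds off it) by the near-pencil observation. The only organisational difference is that the paper invokes the \emph{proof idea} of Lemma~\ref{imp21} with just two or three off-line points (giving bounds like $n+(n-1)$ or $(n-1)+(n-2)+(n-3)$ directly), whereas you apply the lemma in full with all $q$ off-line points and then minimise over $q\ge 2$; this buys you a slightly tighter bound at the cost of a little more bookkeeping, but both routes arrive at the same exceptional case and dispatch it identically.
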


\begin{proof}
Suppose one colour class lies on a line $L$. 
If red is collinear, then using a similar idea to the proof of Lemma~\ref{imp21} we see that there are at least $n + (n-1)$ bichromatic lines, unless there is only one blue point not on $L$. In that case $P$ is a near-pencil.
If blue is collinear and there are $n$ blue points, the same argument applies.
Now suppose blue is collinear and there are $n-1$ blue points.
If $L$ is bichromatic, we get $(n-1) + (n-2) +1 = |P|-1$ bichromatic lines.
If $L$ is monochromatic, we have at least $(n-1)+(n-2) +(n-3)$ bichromatic lines, which suffices as long as $n\geq 4$.
Finally, if $n=3$ and $L$ is monochromatic, each of the two blue points on $L$ lies in two bichromatic lines, otherwise there would be four collinear points and $P$ would be a near-pencil. 
%
\end{proof}

It is simple to check that this implies the following. 

\begin{corollary}\label{easycoro} There is no counterexample to Theorem~\ref{mainthm} with $|P|-2$ collinear points.
\end{corollary}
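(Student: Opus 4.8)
The plan is to deduce Corollary~\ref{easycoro} from Proposition~\ref{propcoll} by a short case analysis. Suppose $P$ is a counterexample to Theorem~\ref{mainthm} in which $|P|-2$ points lie on a line $L$, but (by Proposition~\ref{propcoll}) neither colour class is entirely collinear. Write $r$ and $b$ for the number of red and blue points on $L$; then exactly two points of $P$ lie off $L$, and since neither colour class is collinear, at least one red point and at least one blue point lie off $L$. So the two off-line points consist of one red point $q$ and one blue point $s$, and consequently $r = n-1$ and $b \in \{n-1, n-2\}$ depending on whether the blue class has size $n$ or $n-1$.

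Next I would apply Lemma~\ref{imp21} to the line $L$ with $r = n-1$ red points. Since $q$ lies off $L$, the line $L$ has at least one blue point (otherwise blue would be collinear on... wait, no—rather, since $b \geq 1$ and $r \geq 1$, the parameters $r' = \min\{n-r, b\} = \min\{1, b\} = 1$ and similarly the relevant quantity for blue points on $L$ is controlled). With $r' = 1$ the first sum in Lemma~\ref{imp21} contributes $b$ bichromatic lines (the $b$ lines from the off-line blue point $s$ to the blue... rather to the red points on $L$ — more carefully, the $b$ lines joining $q$ to the blue points on $L$, hmm). I would instead just read off the bound: counting bichromatic lines through $q$ and through $s$ separately. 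The off-line red point $q$ together with the $b$ blue points of $L$ gives $b$ bichromatic lines; the off-line blue point $s$ together with the $r = n-1$ red points of $L$ gives, after subtracting the at most one coincidence with the line $qs$, at least $n-2$ further bichromatic lines. If the line $qs$ is itself bichromatic (which it is, being red–blue), we add one more. This already yields $b + (n-2) + 1$ bichromatic lines. In the case $b = n-1$ this is $2n-2 = |P|-1$; in the case $b = n-2$ (so $|P| = 2n-2$) this is $2n-3 = |P|-1$. Either way we reach the desired bound, contradicting the assumption that $P$ is a counterexample.

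The only subtlety is bookkeeping the overlaps: the line $qs$ might coincide with one of the lines counted from $q$ (if $s$ lies on a line through $q$ and a blue point of $L$) or one from $s$, and one must be careful not to double-count it, but since we only ever claim it once as the "extra" bichromatic line and we have already conservatively subtracted one coincidence on the $s$ side, the count is safe. I expect the main (minor) obstacle to be precisely this inclusion–exclusion tallying, together with checking the very small cases $n = 3$ by hand, where $|P| - 2$ collinear points and the non-pencil hypothesis leave almost no freedom. Given how tight the margins are, I would state the computation explicitly rather than wave at Lemma~\ref{imp21}, to make sure the "$+1$" from the bichromatic line $qs$ is genuinely new.
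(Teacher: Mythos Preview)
Your approach is the right one and is essentially what the paper intends (the paper itself gives no argument beyond ``It is simple to check that this implies the following''). However, your bookkeeping contains a genuine error that is masked by a second, compensating error.

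First, you forgot to count the line $L$ itself. Since $r = n-1 \geq 1$ and $b \geq 1$, the line $L$ is bichromatic, and it is distinct from every line through $q$ or $s$ (neither of which lies on $L$) and from $qs$. So your tally $b + (n-2) + 1 = b + n - 1$ should really be $b + n$.

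Second, your values of $|P|$ are off by one in both cases. If $b = n-1$ then there are $n$ blue points, so $|P| = 2n$ and $|P| - 1 = 2n - 1$, not $2n-2$. If $b = n-2$ then there are $n-1$ blue points, so $|P| = 2n-1$, not $2n-2$, and hence $|P|-1 = 2n-2$, not $2n-3$.

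With both corrections the argument goes through cleanly: $b + n = 2n - 1 = |P| - 1$ in the first case and $b + n = 2n - 2 = |P| - 1$ in the second. Your worry about double-counting $qs$ among the $q$-to-blue lines is in fact already handled: whichever point of $L$ (if any) the line $qs$ passes through, your conservative subtraction of one on the $s$ side together with adding $qs$ back yields the valid lower bound $b + n - 1$ for the bichromatic lines other than $L$. Once $L$ is included, no separate treatment of $n = 3$ is needed.
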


Using these observations we can establish the following strengthening of Claim 2.1 in \cite{kleitpinch}. 

\begin{lemma}\label{KPlem}
There is no counterexample to Theorem~\ref{mainthm} with $n$ collinear points.
\end{lemma}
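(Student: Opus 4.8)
The plan is to apply Lemma~\ref{imp21} to a line $L$ carrying $n$ points of $P$, and then to finish with the quadratic estimate $r^{2}+b^{2}\ge\tfrac{1}{2}(r+b)^{2}$. First I would dispose of the degenerate possibilities for $L$. If $L$ contains an entire colour class --- all $n$ red points, or (when there are $n$ blue points) all $n$ blue points, or (when there are $n-1$ blue points) all $n-1$ of them together with a red point --- then Proposition~\ref{propcoll} already shows $P$ is not a counterexample; in particular this settles the case that $L$ is monochromatic. (If some line carries more than $n$ points of $P$, then either it contains a full colour class, so Proposition~\ref{propcoll} applies, or the estimate below goes through a fortiori with a larger bound.) It thus remains to treat a line $L$ carrying $r$ red and $b$ blue points with $r+b=n$, $r,b\ge 1$, and neither colour class lying wholly on $L$; such an $L$ is bichromatic.

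Next I would feed this into Lemma~\ref{imp21}. There are $n-r=b$ red points of $P$ off $L$, and either $r$ blue points off $L$ (the $n$-blue case) or $r-1$ of them (the $(n-1)$-blue case); thus the parameters of the lemma are $r'=b$ and $b'=r$, respectively $b'=r-1$, and a short computation gives that $P$ determines at least
\[
\frac{r(r+1)+b(b+1)}{2}+1
\qquad\text{or}\qquad
\frac{r(r+1)+b(b+1)}{2}
\]
bichromatic lines, according as there are $n$ or $n-1$ blue points --- in the latter case the drop of $b'$ by one exactly cancels the $+1$ earned because $L$ is bichromatic. The corresponding targets are $|P|-1=2n-1$ and $|P|-1=2n-2$.

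To conclude I would use $r^{2}+b^{2}\ge\tfrac{1}{2}(r+b)^{2}=\tfrac{n^{2}}{2}$, so that $r(r+1)+b(b+1)=(r^{2}+b^{2})+n\ge\tfrac{n^{2}}{2}+n$, whence the number of bichromatic lines is at least $\tfrac{1}{4}(n^{2}+2n)+1$ in the $n$-blue case and at least $\tfrac{1}{4}(n^{2}+2n)$ in the $(n-1)$-blue case. Comparing with the targets reduces everything to two elementary quadratic inequalities: $n^{2}-6n+12\ge 0$, which holds for all $n$ since the discriminant is negative, and $(n-2)(n-4)\ge 0$, which holds for every $n$ except $n=3$. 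I expect the only slightly delicate points to be the bookkeeping over which colour class can sit wholly on $L$ and the off-by-one in the $(n-1)$-blue case; what remains is a direct check of the single boundary value $n=3$ with two blue points --- there $L$ must carry two red and one blue point, and the bound evaluates to exactly $4=|P|-1$ --- together with the trivial small cases $n\le 2$, where $P$ is necessarily collinear or a near-pencil. The core estimate itself is immediate.
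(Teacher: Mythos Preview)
Your approach is sound and genuinely different from the paper's. The paper never invokes the quadratic bound of Lemma~\ref{imp21}; instead it argues directly that if at least two points of each colour lie off $L$ then every point of $L$ (with at most two exceptions) lies on two distinct bichromatic lines through those off-line points, giving $2|L|-2+1\ge |P|-1$, and then uses Corollary~\ref{easycoro} together with a short case analysis when only one point of some colour lies off $L$. Your route --- feed $L$ into Lemma~\ref{imp21}, then use $r^{2}+b^{2}\ge\tfrac12(r+b)^{2}$ --- is more mechanical and avoids the case split, at the price of a slightly lossy inequality that forces a boundary check at $n=3$.

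There is one arithmetic slip. In the $n$-blue case the target is $|P|-1=2n-1$, not $2n-2$, so the comparison
\[
\frac{n^{2}+2n}{4}+1\ \ge\ 2n-1
\]
reduces to $n^{2}-6n+8\ge 0$, i.e.\ $(n-2)(n-4)\ge 0$, \emph{not} to $n^{2}-6n+12\ge 0$. Thus $n=3$ is a boundary value in the $n$-blue case as well. Fortunately the exact bound (before the QM--AM relaxation) already suffices there: with $r+b=3$ and $r,b\ge 1$ one has $\{r,b\}=\{1,2\}$ and $\tfrac12\bigl(r(r+1)+b(b+1)\bigr)+1=5=2n-1$. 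So the proof survives once you add this check.

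A second point worth tightening is the ``a fortiori'' dismissal of $|L|>n$. When $r+b>n$ the parameters of Lemma~\ref{imp21} become $r'=n-r$ and $b'=n-b$ (or $n-1-b$), so the formula changes shape and the bound is not literally monotone in $|L|$. It does still work: writing $x=n-r$, $y=n-b$ one finds the bound equals $n(x+y)-xy-\binom{x+y}{2}+1$, and the inequality $n(x+y-2)\ge xy+\binom{x+y}{2}-2$ holds whenever $x,y\ge 1$ and $x+y\le n-1$, which is exactly the situation once neither colour class lies wholly on $L$. But this deserves a line of justification rather than a parenthetical.
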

\begin{proof}
Suppose there is a line $L$ with $n$ or more points of $P$.
Proposition~\ref{propcoll} implies that $L$ is bichromatic and that there is at least one red and one blue point not on $L$.
Suppose there are at least two of each colour not on $L$.
Then there are at least two bichromatic lines through all except two of the points on $L$.
Along with $L$ this yields $2n -2 +1 \geq |P|-1$ bichromatic lines.

Corollary~\ref{easycoro} says that there are at least three points not on $L$.
So now suppose there is only one point $p$ of some colour not on $L$, and hence at least two  of the other colour, say $q_1$ and $q_2$.
If $p$ is red, there are $n-1$ red points and at least one blue point on $L$.
There are at least $(n-1)+(n-2)$ bichromatic lines through the red points on $L$ and $\{q_1,q_2\}$, one bichromatic line through $p$ and the blue point on $L$, and $L$ itself, giving $2n-1$.

Finally, if $p$ is blue and there are $n-1$ blue points in total, then there are $n-2$ blue points and at least two red points on $L$.
This gives at least $(n-2)+(n-3)$ bichromatic lines through the blue points on $L$ and $\{q_1,q_2\}$, two bichromatic lines through $p$ and the red points on $L$, and $L$ itself, giving $2n-2=|P|-1$.
\end{proof}



\section{Large minimal counterexamples}

Kleitman and Pinchasi use proof by induction on the size of $P$ to establish Theorem~\ref{KPthm}.
They establish an inductive step that works for $n\geq 20$ for both Theorem~\ref{KPthm} and Conjecture~\ref{KPconj}. 
In this section we reproduce their argument for the sake of completeness, with a few simplifications.
We will also recast their argument in terms of a search for a minimal counterexample.

Suppose that $P$ is a smallest counterexample to Theorem~\ref{mainthm}, so removing a point from $P$ cannot yield another counterexample. 
%
Let $s_{i,j}$ be the number of lines determined by $P$ with exactly $i$ red points and $j$ blue points, where we always assume $i+j \geq 2$.

\begin{lemma}\label{1redlem} We may assume that $s_{1,j}=0$ for all $j$. In particular $s_{1,1}=0$, so every line determined by just two points is monochromatic. Moreover, by symmetry, $s_{i,1}=0$ for all $i$ in the case of $n$ blue points.
\end{lemma}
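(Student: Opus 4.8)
The plan is to argue by contradiction using the minimality of $P$. Suppose to the contrary that some line $L$ carries exactly one red point $p$ together with $j\geq 1$ blue points, so that $s_{1,j}>0$. I would delete $p$ and set $P':=P\setminus\{p\}$, a set of $n-1$ red points and $n$ or $n-1$ blue points. After possibly interchanging the names of the two colours, $P'$ is a configuration of the type appearing in Theorem~\ref{mainthm} (it has $m$ points of one colour and $m$ or $m-1$ of the other, for a suitable $m\leq n$), and since $P$ was chosen to be a smallest counterexample, $P'$ is not a counterexample. Hence one of the following holds: $P'$ is collinear, $P'$ is a near-pencil, or $P'$ determines at least $|P'|-1=|P|-2$ bichromatic lines.

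Consider the last alternative first. Every bichromatic line of $P'$ remains bichromatic in $P$, since adding a point can only enlarge a line's point set. On the other hand, $L$ is bichromatic in $P$ --- it contains the red point $p$ and at least one blue point --- but it is not bichromatic in $P'$, because deleting $p$ leaves $L$ with only blue points on it. Therefore $P$ determines at least $(|P|-2)+1=|P|-1$ bichromatic lines, contradicting the assumption that $P$ is a counterexample. In the remaining two alternatives I would fall back on the structural results already established. If $P'$ is collinear, then $P$ has at least $|P'|=|P|-1\geq n$ collinear points, which is impossible by Lemma~\ref{KPlem}; if $P'$ is a near-pencil, then $P$ contains the spine of that near-pencil, a set of $|P'|-1=|P|-2$ collinear points, contradicting Corollary~\ref{easycoro}.

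This contradiction shows that no line of $P$ has exactly one red point, i.e.\ $s_{1,j}=0$ for every $j$; in particular $s_{1,1}=0$, so every line through exactly two points of $P$ is monochromatic. When $P$ has $n$ blue points the whole configuration --- and the notions of counterexample, near-pencil, and bichromatic line --- are invariant under swapping the two colours, so running the same deletion argument on a line carrying a single blue point gives $s_{i,1}=0$ for all $i$ as well. I do not anticipate a genuine obstacle here; the one point requiring care is the bookkeeping in the second paragraph, namely verifying in each of the two cases for the number of blue points that $|P'|-1$, the collinearity bound $n$ of Lemma~\ref{KPlem}, and the spine length $|P|-2$ of a near-pencil all line up with the hypotheses of Lemma~\ref{KPlem} and Corollary~\ref{easycoro}.
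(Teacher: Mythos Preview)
Your proof is correct and follows the same deletion-and-minimality argument as the paper: remove the lone red point $p$ from a line with $s_{1,j}>0$, and observe that $P'$ is either degenerate (collinear or near-pencil, forcing $|P|-1$ or $|P|-2$ collinear points in $P$ and contradicting Corollary~\ref{easycoro} or the hypotheses on $P$) or has at least $|P'|-1$ bichromatic lines, which together with $L$ gives $P$ at least $|P|-1$ bichromatic lines. The paper compresses this into two sentences, but your expanded version fills in exactly the steps the paper leaves implicit.
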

\begin{proof} If $s_{1,j}\geq 1$, removing the red point from such a line would yield either a near-pencil or a smaller counterexample.
In the first case, $P$ had all but two points on a line, contradicting Corollary~\ref{easycoro}.
\end{proof}

Let $S$ be the number of unordered pairs of points in $P$ with the same colour, and let $D$ be the number of unordered pairs with different colours. If there are $n$ blue points then $S- D = 2\binom{n}{2} - n^2 =-n $. If there are $n-1$ blue points then $S-D= \binom{n}{2} + \binom{n-1}{2} - n(n-1) = 1-n $. 
Thus $S-D \leq 1-n$.

Clearly $D = \sum_{i,j\geq1} ijs_{i,j}$. Ignoring the contribution of monochromatic lines with three or more points, we also have 
\begin{equation}\label{eqn1}
S \geq s_{2,0} + s_{0,2} + \sum_{i,j\geq1} \left(\binom{i}{2}+\binom{j}{2} \right)s_{i,j} \enspace.
\end{equation}

We use a classical inequality due to Melchior~\cite{Melchior-DM41}. 
Let $t_i$ be the number of lines containing $i$ points in $P$.

\begin{theorem}[Melchior's Inequality]\label{melchior}
Let $P$ be a non-collinear set of points. Then $$ t_2 \geq 3+ \sum_{i\geq 3}(i-3)t_i \enspace .$$
\end{theorem}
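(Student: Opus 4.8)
The plan is to give Melchior's original argument via projective duality and Euler's formula. First I would pass to the dual projective plane: replace each point $p\in P$ by its dual line $p^*$, obtaining an arrangement $\mathcal{A}$ of $|P|$ pairwise distinct lines in $\mathbb{RP}^2$. Distinct points dualize to distinct lines, and collinear points dualize to concurrent lines, so the hypothesis that $P$ is non-collinear (in particular $|P|\geq 3$) translates into: the lines of $\mathcal{A}$ are not all concurrent. A line of the original plane through exactly $i$ points of $P$ corresponds to a vertex of $\mathcal{A}$ through which exactly $i$ lines pass; hence the vertices of $\mathcal{A}$ split by multiplicity, with precisely $t_i$ vertices of multiplicity $i$, and $t_i$ as defined in the statement.

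Next I would analyse the cell decomposition that $\mathcal{A}$ induces on $\mathbb{RP}^2$, with vertex set $V$, edge set $E$, face set $F$, and apply Euler's formula $|V|-|E|+|F|=1$ (the Euler characteristic of the projective plane). Two incidence counts feed in. At a vertex of multiplicity $i$ each of the $i$ lines passes straight through, contributing two edge-ends, so the vertex has degree $2i$; summing degrees gives $2|E|=\sum_{i\geq 2}2i\,t_i$, i.e.\ $|E|=\sum_{i\geq 2}i\,t_i$, while $|V|=\sum_{i\geq 2}t_i$. Summing edges around faces gives $2|E|=\sum_f(\#\text{edges of }f)\geq 3|F|$, because every face of the arrangement is bounded by at least three edges.

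The rest is bookkeeping: from Euler's formula $|F|=1-|V|+|E|$, and substituting into $2|E|\geq 3|F|$ yields $|E|\leq 3|V|-3$. Plugging in the expressions above gives $\sum_{i\geq 2}i\,t_i\leq 3\sum_{i\geq 2}t_i-3$, i.e.\ $\sum_{i\geq 2}(i-3)t_i\leq -3$; isolating the $i=2$ term rearranges this to $t_2\geq 3+\sum_{i\geq 3}(i-3)t_i$, as required. Alternatively one may lift $\mathcal{A}$ to the $2$-sphere via the antipodal double cover, where $|V|,|E|,|F|$ all double and $\chi=2$, arriving at the same inequality.

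The one point that needs genuine care is the bound $2|E|\geq 3|F|$, i.e.\ that no face is a monogon or a bigon. Monogons cannot occur since the cells are bounded by straight lines; a bigon would be a face with two corners whose two bounding edges lie on two lines, but two distinct lines of $\mathbb{RP}^2$ meet in exactly one point, a contradiction — and the non-concurrency hypothesis guarantees $|F|\geq 2$ so that $\mathcal{A}$ is honestly a subdivision rather than a single-cell degenerate case. I expect this verification, together with getting the Euler characteristic of $\mathbb{RP}^2$ right, to be the only place where one must be attentive; the remainder is routine.
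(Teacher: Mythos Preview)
Your argument is the classical Melchior proof via projective duality and Euler's formula, and it is correct. The paper itself does not prove this theorem: it simply quotes Melchior's Inequality with a citation to~\cite{Melchior-DM41} and uses it as a black box, so there is no ``paper's own proof'' to compare against. Your write-up is in fact exactly the argument in Melchior's original paper.

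One small point worth tightening: your exclusion of bigons assumes the two bounding edges lie on \emph{distinct} lines and that the face has two distinct corners. You should also rule out the degenerate possibilities of a two-edge face whose edges lie on the \emph{same} line, or a face with two edges meeting at a single vertex (two loops). Both are dispatched by the non-concurrency hypothesis: since not all dual lines pass through one point, every line of $\mathcal{A}$ carries at least two vertices, so no edge is a loop and no face can be bounded by a single line. With that remark the verification that every face has at least three sides is complete.
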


Since $t_2=s_{2,0} + s_{0,2}$ by Lemma~\ref{1redlem}, combining Theorem~\ref{melchior} with (\ref{eqn1}) we get 
$$S-D \geq 3+\sum_{ i,j \geq 1}(i+j-3)s_{i,j}  + \sum_{i,j\geq1} \left(\binom{i}{2}+\binom{j}{2} -ij \right)s_{i,j} \enspace. $$
Using $S-D \leq 1-n$ this gives the following lower bound on (twice) the number of bichromatic lines.
\begin{equation}\label{eqn2}
2\sum_{i,j\geq 1} s_{i,j} \geq n+2+ \sum_{i,j\geq1} \left(\frac{1}{2}\left((i-j)^2 +i+j\right) -1 \right)s_{i,j} \enspace.
\end{equation}
Note that coefficients of the $s_{i,j}$ on the right hand side of (\ref{eqn2}) are all positive because we don't allow $i=j=1$. We wish to minimise the right hand side subject to the constraint
\begin{equation}\label{eqn3}
 D=\sum_{i,j\geq 1} ij s_{i,j} \geq n(n-1) \enspace.
\end{equation}

The miminum is acheived when the only non-zero $s_{i,j}$ is the one for which the ratio 
\begin{equation}\label{ratio}
 \frac{\frac{1}{2}\left((i-j)^2 +i+j\right) -1}{ij}
\end{equation}
 of the coefficients in (\ref{eqn2}) and (\ref{eqn3}) is minimised. This is because this $s_{i,j}$ simultaneously contributes the least to the right hand side of (\ref{eqn2}) and the most to the left hand side of (\ref{eqn3}).
Clearly this minimum is acheived when $i=j$ since this minimises both the difference and the sum relative to the product (this is the arithmetic-geometric mean inequality).
So (\ref{ratio}) becomes $(i-1)/i^2$, which decreases as $i$ grows larger for $i\geq2$.

Now by Lemmas~\ref{imp21} and~\ref{KPlem}, we have that $\frac{1}{2}(i^2 + j^2 + i +j)\leq 2n-2$. This restricts $(i,j)$ to lie within a circle centred at $(-\frac{1}{2},-\frac{1}{2})$.
The minimum of (\ref{ratio}) still occurs on the line $i=j$ for this domain. To see this note that the curves on which (\ref{ratio}) is constant are hyperbolas that are symmetric about $i=j$ and tangent to circles centred on $i=j$.
Thus if $k$ is the maximum integer such that $k^2+k \leq 2n-2$, then $s_{k,k}$ is the non-zero variable that minimises the right hand side of (\ref{eqn2}).
Therefore we may set $k = \floor{\sqrt{2n}}$.

The constraint~(\ref{eqn3}) implies that $k^2 s_{k,k} \geq n(n-1)$, which implies $s_{k,k} \geq n(n-1)/k^2$.
Since $P$ is a counterexample, $2\sum_{i,j\geq 1} s_{i,j} \leq 4n-4$.
Combining all this with (\ref{eqn2}) gives
\begin{equation}\label{eqn5}
4n-4 \geq n+2+ (\floor{\sqrt{2n}} -1 )\frac{n(n-1)}{(\floor{\sqrt{2n}})^2} \enspace.
\end{equation}
The right hand side of (\ref{eqn5}) grows as $\Omega(n^{3/2})$ and the left hand side linearly, so it must be false for large $n$. One can check that it is false for all $n\geq21$.
Therefore any minimal counterexamples to Theorem~\ref{mainthm} must occur with $n \leq 20$.



\section{Small minimal counterexamples}


We continue our search for minimal counterexamples with $n\leq 20$.
Similar to Kleitman and Pinchasi, our main tool is computer based linear programming.
We include as many extra constraints as we can to eliminate as many $n$ as possible.
In the end we are left with just two cases where a minimal counterexample may exist.
We will eliminate these possibilities with direct geometric arguments.

As well as constraints arising from the previous discussion, we use Hirzebruch's Inequality~\cite{Hirzebruch}. As before, $t_i$ is the number of lines containing $i$ points in $P$.
Note that Corollary~\ref{easycoro} ensures that at most $|P|-3$ points are collinear.
\begin{theorem}[Hirzebruch's Inequality]\label{hirzebruch}
Let $P$ be a set of points with at most $|P|-3$ collinear. 
Then $$ t_2 +\frac{3}{4}t_3 \geq n + \sum_{i\geq5}(2i-9)t_i \enspace  . $$
\end{theorem}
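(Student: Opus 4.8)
The plan is to deduce the inequality from the Bogomolov--Miyaoka--Yau inequality, following Hirzebruch~\cite{Hirzebruch}. By projective duality we may replace the point configuration by the dual arrangement $\mathcal{A}$ of $n=|P|$ lines in the complex projective plane $\mathbb{CP}^2$, where now $t_i$ counts the points at which exactly $i$ lines of $\mathcal{A}$ meet. The hypothesis that at most $|P|-3$ of the original points are collinear becomes the condition that at most $n-3$ lines of $\mathcal{A}$ are concurrent; in particular $\mathcal{A}$ is neither a pencil nor a near-pencil, which is the only feature of the hypothesis that will be used.

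First I would resolve the singularities of $\mathcal{A}$: let $\pi\colon Y \to \mathbb{CP}^2$ be the blow-up at every point where at least three lines of $\mathcal{A}$ meet. Then the union of the strict transforms of the lines together with the exceptional curves is a divisor $D$ on $Y$ with only normal crossings. Next, using Hirzebruch's Kummer-cover construction, I would build an abelian Galois cover $\varphi\colon X \to Y$ with group $(\mathbb{Z}/N)^{n-1}$ branched exactly over $D$, and check that for $N \geq 2$ the surface $X$ is smooth. The crucial point is then to verify that, for $N$ not too small, $X$ (or its minimal model) is a minimal surface of general type --- this is precisely where the exclusion of pencils and near-pencils enters --- so that the Bogomolov--Miyaoka--Yau inequality $c_1^2(X) \leq 3\,c_2(X)$ is available.

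The remaining work is bookkeeping: one expresses $c_1^2(X)$ and $c_2(X)$ in terms of $n$, $N$ and the numbers $t_i$, using the standard formulas for the behaviour of Chern numbers under blow-up and under an abelian branched cover (the ramification and Riemann--Hurwitz-type identities applied along the tower $X \to Y \to \mathbb{CP}^2$). Substituting these expressions into $c_1^2(X) \leq 3\,c_2(X)$, dividing out the common power of $N$, and letting $N \to \infty$ (equivalently, choosing a suitable fixed $N$) produces a polynomial inequality in the $t_i$ which rearranges to
$$ t_2 + \tfrac{3}{4}t_3 \geq n + \sum_{i\geq 5}(2i-9)t_i \enspace . $$

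I expect the main obstacle to be the step establishing that the Kummer cover genuinely satisfies the hypotheses of the Bogomolov--Miyaoka--Yau inequality: one must rule out the degenerate cases in which $X$ fails to be of general type or fails to be minimal, and this is exactly what forces the assumption that no $n-2$ or more of the points are collinear. The Chern-number computation itself is routine in principle but must be carried out carefully to track the contributions of the exceptional divisors and the ramification along each component of $D$.
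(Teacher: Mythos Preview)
The paper does not contain a proof of Theorem~\ref{hirzebruch}: it is quoted from Hirzebruch~\cite{Hirzebruch} and used only as a ready-made constraint in the linear program. So there is nothing in the paper to compare your proposal against.

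That said, your outline is an accurate sketch of Hirzebruch's own argument: dualise to a line arrangement, blow up the points of multiplicity at least three to obtain a normal-crossings divisor, take the abelian $(\mathbb{Z}/N)^{n-1}$ Kummer cover branched along it, verify that the resulting surface is of general type precisely when the arrangement is neither a pencil nor a near-pencil, and then feed the Chern numbers into the Bogomolov--Miyaoka--Yau inequality $c_1^2\leq 3c_2$.

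One correction is worth making. The specific constant $\tfrac{3}{4}$ in front of $t_3$, and the coefficients $2i-9$ on the right, arise from the choice $N=3$, not from letting $N\to\infty$. For each admissible $N$ the inequality $3c_2(X)-c_1^2(X)\geq 0$ is a polynomial identity in $N$ whose coefficients are linear forms in the $t_i$; evaluating at $N=3$ gives exactly the displayed inequality, whereas passing to the leading term in $N$ produces a different (and for this purpose weaker) relation. So ``choosing a suitable fixed $N$'' is right and ``letting $N\to\infty$'' is not equivalent to it here.
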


We also introduce the following three constraints. 
\begin{observation}\label{newgpobs}
Suppose there are $n$ blue points. Each red point can lie on at most $\floor{n/2}$ lines determined by two or more blue points.
\end{observation}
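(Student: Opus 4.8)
The plan is a single pigeonhole argument, so it is short. Fix a red point $p$ and let $\mathcal{L}$ be the collection of lines through $p$ that each contain at least two blue points. The key step is to observe that any two distinct members of $\mathcal{L}$ intersect only at $p$, and $p$ is red, not blue; hence the sets of blue points lying on the various lines of $\mathcal{L}$ are pairwise disjoint. Since each line of $\mathcal{L}$ carries at least two blue points and there are exactly $n$ blue points in total, this forces $2|\mathcal{L}| \le n$, and therefore $|\mathcal{L}| \le \floor{n/2}$ because $|\mathcal{L}|$ is an integer.

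I would write this up essentially verbatim: introduce the red point, note the disjointness of blue-point sets across distinct lines through it, and conclude by counting. No case analysis is needed, and no appeal to the earlier lemmas is required.

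I do not expect a genuine obstacle here. The only point worth stating explicitly is that a line ``determined by two or more blue points'' passing through $p$ cannot share any of its blue points with another such line through $p$, which is immediate from the fact that two distinct lines meet in at most one point and that point is the red point $p$. (One could record the analogous bound for a blue point lying on lines with two or more red points, with $n$ or $n-1$ red points in place of $n$ blue points, but only the stated version is needed below.)
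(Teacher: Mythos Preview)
Your argument is correct and is exactly the intended reasoning: the paper states this as an observation without proof, and the implicit justification is precisely the pigeonhole count you give --- distinct lines through the red point $p$ meet only at $p$, so their blue-point sets are disjoint, each of size at least two, forcing at most $\floor{n/2}$ such lines.
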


\begin{lemma}\label{tricky}
Suppose $P$ is a set of $n$ red and $n$ (or $n-1$) blue points, and suppose there is a line $L$ with $r$ red and  $b$ blue points. Let $r'= n-r$ and $b'= n-b$ (or $b'= n-1-b$). Then the number of bichromatic lines is at least $$ \min_{i\in [b']} \left\{ i + (r-1)\max \left\{ \ceil{\frac{b'}{i}} , i \right\} \right\} + 
\min_{i\in [r']} \left\{ i + (b-1)\max \left\{ \ceil{\frac{r'}{i}} , i \right\} \right\} \enspace .$$
Moreover, if $r,b\geq 1$ we may add one more to this total.
\end{lemma}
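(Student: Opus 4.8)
The plan is to refine the double-counting argument used in the proof of Lemma~\ref{imp21}. I would split the bichromatic lines determined by $P$ into three pairwise disjoint families: the line $L$ itself (which is bichromatic precisely when $r,b\ge 1$), the family of lines meeting $L$ in a red point and containing a blue point off $L$, and the family of lines meeting $L$ in a blue point and containing a red point off $L$. Since any determined line other than $L$ meets $L$ in at most one point, a line in the second family is never $L$ and never lies in the third family, so these three families really are disjoint and their sizes add. It therefore suffices to show the ``red-on-$L$'' family has size at least $\min_{i\in[b']}\{i+(r-1)\max\{\ceil{b'/i},i\}\}$; the corresponding bound for the ``blue-on-$L$'' family then follows by swapping the two colours, and the ``$+1$'' is immediate when $r,b\ge 1$.

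For the red-on-$L$ family, fix the $b'$ blue points off $L$, and for each red point $p$ on $L$ let $d_p$ be the number of lines through $p$ that contain a blue point off $L$. Since the lines through $p$ partition those $b'$ blue points into $d_p$ nonempty classes, we have $1\le d_p\le b'$. Let $i=\min_p d_p\in[b']$, and pick a red point $p_1$ on $L$ with $d_{p_1}=i$. Its $i$ relevant lines partition the $b'$ blue points off $L$, so by pigeonhole one of them, say $\ell$, contains at least $\ceil{b'/i}$ blue points off $L$. Now consider any of the remaining $r-1$ red points $p$ on $L$. The lines joining $p$ to the blue points on $\ell$ are pairwise distinct (two coinciding would yield a line through $p$ meeting $\ell$ in two points, hence equal to $\ell$, contradicting $p\notin\ell$ as $p\in L\neq\ell$), so $p$ lies on at least $\ceil{b'/i}$ red-on-$L$ lines; and $d_p\ge i$ by the choice of $i$; hence $p$ lies on at least $\max\{\ceil{b'/i},i\}$ such lines. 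Moreover a red-on-$L$ line through one red point of $L$ cannot pass through a second (again it would then be $L$, yet it contains a blue point off $L$), so the line sets counted at distinct red points of $L$ are disjoint. Summing the contribution $i$ from $p_1$ with the contributions $\max\{\ceil{b'/i},i\}$ from the other $r-1$ red points on $L$ gives at least $i+(r-1)\max\{\ceil{b'/i},i\}\ge\min_{i\in[b']}\{i+(r-1)\max\{\ceil{b'/i},i\}\}$, as desired. (When $b'=0$ or $r=0$ the red-on-$L$ family is empty and contributes nothing, and symmetrically for the blue-on-$L$ family; the statement should be read with that convention.)

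I do not expect a genuine obstacle here, only a bookkeeping subtlety: one must combine two different lower bounds for the \emph{same} red point $p$ — the uniform lower bound $i$ on $d_p$ and the pigeonhole bound $\ceil{b'/i}$ coming from the single fixed line $\ell$ — without double counting, and this relies on the repeated, elementary observation that a determined line other than $L$ meets $L$ in exactly one point, which is what makes all the line sets involved disjoint. Once this is arranged, the estimate is a routine pigeonhole count, and adding the colour-reversed version together with the line $L$ (when $r,b\ge 1$) finishes the proof.
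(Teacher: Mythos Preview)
Your proposal is correct and follows essentially the same approach as the paper's own proof: pick a red point on $L$ with the minimum number $i$ of bichromatic lines (excluding $L$), use pigeonhole to find a line through it with at least $\ceil{b'/i}$ blue points off $L$, and then count the distinct bichromatic lines through each of the remaining $r-1$ red points on $L$. Your write-up simply fills in the disjointness bookkeeping that the paper leaves implicit.
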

\begin{proof}
Consider the red points on $L$, and suppose that of them $r_1$ is contained in the least bichromatic lines, and the number of these lines (excluding $L$) is $i$. Then the other $r-1$ red points on $L$ are each contained in at least $i$ bichromatic lines (excluding $L$). But they are also contained in at least $\ceil{\frac{b'}{i}}$ bichromatic lines since some line through $r_1$ contains at least this many blue points.
\end{proof}

\begin{observation}\label{obs1off}
A minimal counterexample to Theorem~\ref{mainthm} must determine precisely $|P|-2$ bichromatic lines. If it has fewer we can remove any point to obtain a smaller counterexample.
\end{observation}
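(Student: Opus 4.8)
The plan is a one-step minimality argument. By definition a counterexample $P$ to Theorem~\ref{mainthm} determines at most $|P|-2$ bichromatic lines, so the content is to show that a \emph{minimal} counterexample cannot determine at most $|P|-3$ of them. I would assume it does and delete a single point $p$, chosen so that $P':=P\setminus\{p\}$ still has the form required by the theorem: if $P$ has $n-1$ blue points take $p$ red (leaving $n-1$ red and $n-1$ blue points), and if $P$ has $n$ blue points take $p$ blue (leaving $n$ red and $n-1$ blue points). In either case $|P'|=|P|-1$.

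The key point is that passing to a subset cannot create bichromatic lines: every line carrying a red and a blue point of $P'$ carries a red and a blue point of $P$ and is determined by $P$, so $P'$ determines at most as many bichromatic lines as $P$, hence at most $|P|-3=|P'|-2<|P'|-1$. It remains to check that $P'$ is legitimate input for the theorem, i.e.\ neither collinear nor a near-pencil. Since $P$ is a counterexample it is itself neither collinear nor a near-pencil, and Corollary~\ref{easycoro} rules out $|P|-2$ collinear points, so $P$ has at most $|P|-3$ collinear points; deleting $p$ cannot increase this number, so $P'$ has at most $|P'|-2$ collinear points and is thus neither collinear nor a near-pencil. Hence $P'$ is a counterexample with $|P'|<|P|$, contradicting minimality, and a minimal counterexample must determine exactly $|P|-2$ bichromatic lines.

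There is no serious obstacle here: the statement is essentially immediate once one observes that removing a point can only decrease the bichromatic line count. The only thing needing a moment's care is the colour of the deleted point, so that the reduced set is still of the form ``$m$ red and $m$ or $m-1$ blue'' to which the theorem applies. (One could alternatively invoke Lemma~\ref{1redlem} to say that removing a suitable point destroys no bichromatic line at all, but the cruder subset bound already suffices.)
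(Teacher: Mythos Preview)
Your argument is correct and follows the same idea as the paper: the observation there is stated with only the one-line justification ``if it has fewer we can remove any point to obtain a smaller counterexample,'' and you have simply spelled out what that entails.

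Your write-up is in fact more careful than the paper on two points. First, the paper's phrase ``remove any point'' is slightly too casual: if $P$ has $n$ red and $n-1$ blue points and one removes a blue point, the result has $n$ red and $n-2$ blue and is no longer of the form covered by Theorem~\ref{mainthm}, so your attention to the colour of the deleted point is warranted. Second, you explicitly invoke Corollary~\ref{easycoro} to verify that $P'$ is neither collinear nor a near-pencil, a step the paper leaves implicit. Both additions are sound and improve on the paper's terse justification.
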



All in all the constraints are as follows.
For brevity they are stated only for the case of $n$ red and $n$ blue points. The case of $n-1$ blue points is very similar.

\begin{itemize}
\item $\sum \binom{i}{2} s_{i,j} = \binom{n}{2} $ (Counting red pairs)
\item $\sum \binom{j}{2} s_{i,j} = \binom{n}{2} $ (Counting blue pairs)
\item $\sum ij s_{i,j} = n^2 $ (Counting bichromatic pairs)
\item $\sum (i+j-3) s_{i,j} \leq -3 $ (Melchior's Inequality~\ref{melchior})
\item $s_{1,1} + s_{0,2} + s_{2,0} + \frac{3}{4}(s_{0,3}+ s_{1,2}+s_{2,1}+s_{3,0}) \geq 2n + \sum_{i+j\geq 5} (2i+2j-9)s_{i,j}$ (Hirzebruch's Inequality~\ref{hirzebruch})
\item If $i+j \geq n$ then $s_{i,j} =0$ (Lemma~\ref{KPlem})
\item If $i=0$ or $j=0$ then $s_{i,j} =0$ (Lemma~\ref{1redlem})
\item If $i^2 +i +j^2 +j \geq 4n-2 $ then $s_{i,j} =0$ (Lemmas~\ref{imp21} and~\ref{KPlem})
\item $\sum_{j\geq 2} i s_{i,j} \leq n\floor{n/2} $ (Observation~\ref{newgpobs})
\item $\sum_{i\geq 2} j s_{i,j} \leq n\floor{n/2} $ (Observation~\ref{newgpobs})
\item Constraints from Lemma~\ref{tricky}
\item $\sum_{i,j\geq 1} s_{i,j} = 2n-2$ (Observation~\ref{obs1off})
\item $s_{i,j} \in \N_0$
\end{itemize}




Running this linear program\footnote{%
The program used to generate the linear programs for each case is
available from the author's web page \url{www.ms.unimelb.edu.au/~mspayne/}.}
for each case with $n\leq20$ yields just two cases with a feasible solution.
They are the cases of $8$ red and $7$ blue points, and $6$ red and $5$ blue points.

In the first case, with $8$ red and $7$ blue points, the linear program returns a solution with $s_{2,3}=3$. If one adds the constraint that $s_{2,3} =0$, there is no longer a feasible solution.
So suppose that $s_{2,3} \geq 1$.
Consider a line $L$ containing $3$ blue points $b_1,b_2$ and $b_3$, and not containing $6$ of the red points.
Using the proof method from Lemma~\ref{tricky}, one can check that $b_1$ has $2$ lines through the reds, and $b_2$ and $b_3$ have $3$ (other cases don't yield a counterexample).
Transform $L$ to the line at infinity with a projective transformation.
Then the $6$ red points lie on two parallel lines through $b_1$, with three reds on each.
They also lie on three parallel lines through $b_2$.
Finally, they should also lie on another set of three parallel lines through $b_3$.
This is clearly impossible -- for example, note that there is only one non-crossing straight edged matching on the six red points.

In the case with $6$ red and $5$ blue points, the linear program returns a solution with $s_{2,2}=6$, $s_{0,2}=4$, $s_{2,0}=6$ and $s_{2,1}=3$.
If one adds the constraint that $s_{2,2}\leq5$, there is no longer a feasible solution.
Similarly, there are no solutions with $s_{2,2}\geq7$, and also none with $s_{2,1}\leq 2$.
We will show that this is not geometrically realisable.
We will work in the projective plane and make use of the following well known fact. 
It is simply the statement that one projective basis can be transformed to another.

\begin{proposition}\label{projsquare}
Let $V$ and $W$ be real projective planes. Given $v_1, \ldots, v_4 \in V$ in general position and $w_1, \ldots , w_4 \in W$ in general position, there exists a unique collineation (a bijection that preserves collinearities) from $V$ to $W$ that maps each $v_i$ to $w_i$.
\end{proposition}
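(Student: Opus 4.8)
The plan is to prove existence and uniqueness by passing to a concrete linear model. First I would realise $V=\mathbb{P}(U)$ and $W=\mathbb{P}(U')$, where $U,U'$ are three-dimensional real vector spaces, points being one-dimensional subspaces and lines two-dimensional subspaces; four points are in general position precisely when any three of their lifts to $U$ form a basis. The one normalisation I need is that a quadruple in general position can be lifted to vectors $u_1,u_2,u_3,u_4$ with $u_1,u_2,u_3$ a basis and $u_4=u_1+u_2+u_3$: lift arbitrarily, observe that general position forces $u_4=a_1u_1+a_2u_2+a_3u_3$ with each $a_i\neq 0$, and then replace $u_i$ by $a_iu_i$.

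For existence, I would apply this normalisation to both $v_1,\dots,v_4$ and $w_1,\dots,w_4$ to obtain bases $(u_i)$ of $U$ and $(u_i')$ of $U'$ with $u_4=u_1+u_2+u_3$ and $u_4'=u_1'+u_2'+u_3'$. The linear isomorphism $T\colon U\to U'$ with $u_i\mapsto u_i'$ for $i=1,2,3$ then automatically satisfies $Tu_4=u_4'$, so the induced projectivity $\mathbb{P}(T)\colon V\to W$ is a collineation with $\mathbb{P}(T)(v_i)=w_i$ for every $i$.

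For uniqueness, suppose $\phi,\psi\colon V\to W$ are collineations each sending $v_i$ to $w_i$. Then $\rho:=\psi^{-1}\circ\phi$ is a collineation of $V$ fixing $v_1,\dots,v_4$, and it is enough to show $\rho$ is the identity. At this point I would invoke the fundamental theorem of projective geometry: because the only field automorphism of $\mathbb{R}$ is the identity (such an automorphism preserves squares, hence the order, hence is continuous, hence trivial), every collineation of a real projective plane is a projectivity, so $\rho=\mathbb{P}(S)$ for a linear automorphism $S$ of $U$. Since $S$ fixes each line $\mathbb{R}u_i$ we have $Su_i=\lambda_iu_i$, and applying $S$ to $u_4=u_1+u_2+u_3$ forces $\lambda_4=\lambda_1=\lambda_2=\lambda_3$; thus $S$ is a scalar multiple of the identity and $\rho=\mathbb{P}(S)=\mathrm{id}$.

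The only non-elementary ingredient, and so the step I expect to be the main obstacle, is the appeal to the fundamental theorem of projective geometry in the uniqueness argument; the rest is routine linear algebra. I note that if one only wants uniqueness within the class of projectivities then even this can be dropped, since the final computation already establishes it directly.
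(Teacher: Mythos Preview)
Your argument is correct and is the standard textbook proof: normalise lifts so that the fourth vector is the sum of the first three, build the linear isomorphism for existence, and for uniqueness invoke the fundamental theorem of projective geometry together with the triviality of $\operatorname{Aut}(\mathbb{R})$ to reduce to a projectivity fixing a frame.

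The paper, however, does not supply a proof at all: it introduces the proposition as a ``well known fact'' and remarks only that ``it is simply the statement that one projective basis can be transformed to another.'' So there is nothing to compare against beyond noting that you have filled in what the paper leaves to the reader. Your final observation is also worth recording: for the paper's application (Proposition~\ref{66prop}) only a projectivity is needed, so the appeal to the fundamental theorem could be avoided entirely and uniqueness among projectivities---which your last paragraph already establishes directly---would suffice.
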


\begin{proposition}\label{66prop}
It is not possible to arrange $6$ red points and $5$ blue points in the plane so that $s_{2,2}=6$ and $s_{2,1}=3$.
\end{proposition}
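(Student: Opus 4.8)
The plan is to first distil the combinatorial skeleton forced by the hypotheses, reduce to one completely explicit incidence pattern, and then show that pattern is not realisable over $\mathbb R$ by a brief coordinate computation.

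\emph{Step 1: the combinatorial skeleton.} Since every red--blue pair lies on a unique line, $\sum_{i,j\geq 1} ij\,s_{i,j}$ equals the number $6\cdot 5 = 30$ of such pairs; as $4s_{2,2}+2s_{2,1}=30$ already, there are no other bichromatic lines and each of the nine counted lines carries exactly two red points. Hence nine of the $\binom 62 = 15$ red pairs lie on bichromatic lines and six on monochromatic red lines. Counting incidences between the nine bichromatic lines and the five blue points gives $2s_{2,2}+s_{2,1}=15$; since any blue point lies on at most $\floor{6/2}=3$ of these lines (any two of them meet only at that point), each blue point lies on exactly three, and the reds on those three lines split the six reds into three pairs --- a perfect matching of the reds, ``centred'' at the blue point. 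Recording, for each red, its partner in the five matchings and bookkeeping multiplicities (a red pair on an $s_{2,2}$-line occurs in two of the matchings, one on an $s_{2,1}$-line in one) forces the six $s_{2,2}$-pairs to form a $2$-regular graph on the reds and the three $s_{2,1}$-pairs to form a perfect matching. A similar observation --- if a line carries at least three reds it is monochromatic, and any blue point then matches each of those reds with a red off the line --- shows no line carries more than three reds.

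\emph{Step 2: reduction to one pattern.} A $2$-regular graph on six vertices is a $6$-cycle or two triangles. The latter is impossible: the three sides of a triangle on three (necessarily non-collinear) reds meet pairwise at the three red vertices, so each blue point lies on at most one of them, giving at most five blue--line incidences, whereas two blue points on each side would need six. So the $s_{2,2}$-graph is a $6$-cycle; label the reds $r_1,\dots,r_6$ around it. The $s_{2,1}$-matching is then a perfect matching of $K_6$ disjoint from this $6$-cycle, i.e.\ of the complementary triangular prism, and up to the dihedral symmetry of the cycle there are two cases. In each case one pins down which matchings the five blue points realise, using the multiplicity constraints of Step 1 together with the fact that distinct blue points realise distinct matchings (otherwise two of the three matching-lines, joining disjoint red pairs, would coincide, putting four reds on one line). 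One case admits no assignment at all; the other forces five specific, distinct matchings, which translate into the following pattern. Writing the blue points as $b_1,\dots,b_5$: the two alternating triples of sides of the hexagon $r_1\cdots r_6$ are concurrent, at $b_4$ and at $b_5$; and the three intersection points of opposite sides, $\overline{r_1r_2}\cap\overline{r_4r_5}$, $\overline{r_2r_3}\cap\overline{r_5r_6}$, $\overline{r_3r_4}\cap\overline{r_6r_1}$, are $b_1$, $b_2$, $b_3$, lying respectively on the long diagonals $\overline{r_3r_6}$, $\overline{r_1r_4}$, $\overline{r_2r_5}$ (which are the three $s_{2,1}$-lines).

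\emph{Step 3: non-realisability.} One checks that $\overline{b_4b_5}$ is none of the nine bichromatic lines, hence is monochromatic and contains no red point; by Proposition~\ref{projsquare} apply a collineation sending it to the line at infinity. Then $\overline{r_1r_2},\overline{r_3r_4},\overline{r_5r_6}$ become parallel (they met at $b_4$) and so do $\overline{r_2r_3},\overline{r_4r_5},\overline{r_6r_1}$; normalise these two directions to vertical and horizontal and fix four of the red coordinates, so that
\[ r_1=(0,0),\quad r_2=(0,1),\quad r_3=(1,1),\quad r_4=(1,g),\quad r_5=(h,g),\quad r_6=(h,0) \]
for scalars $g,h$. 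The three diagonal incidences of Step 2 now read $(0,g)\in\overline{r_3r_6}$, $(h,1)\in\overline{r_1r_4}$, $(1,0)\in\overline{r_2r_5}$; these simplify to $g+h=gh$, $gh=1$, $g+h=1$. The first is implied by the other two, and $g+h=1$ together with $gh=1$ makes $g,h$ the roots of $z^2-z+1$, which has no real root. This contradiction proves the proposition.

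\emph{The main obstacle} is Step 3. A priori the incidences imposed in Step 2 could be projectively dependent --- just as five incidences among six points of a conic force a sixth (Pascal) --- so it is not obvious that they are over-determined; one has to carry out the elimination to see that they are. It is also essential to reach the finite list of combinatorial cases \emph{before} choosing coordinates, so that no genuine configuration is lost to a degenerate chart (a point sent to infinity or a forced coincidence).
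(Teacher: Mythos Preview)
Your proof is correct, and the route genuinely differs from the paper's. The paper jumps into coordinates almost immediately: it fixes one $s_{2,2}$-line $L=\{r_5,r_6,b_1,b_2\}$, observes that $\{r_1,r_2,r_3,r_4\}$ is in general position (since $b_1$ pairs them into two disjoint collinear pairs), and uses Proposition~\ref{projsquare} to send these four reds to the corners of a unit square. It then locates $b_1,\dots,b_4$ explicitly, exploits a reflection symmetry of the square to pin down which three lines $b_5$ must lie on, and reaches the contradiction $3c^2=-1$ in two parameters $a,c$. You instead exhaust the combinatorics first --- deducing that the $s_{2,2}$-graph is a $6$-cycle (ruling out two triangles by the incidence count), that the $s_{2,1}$-matching must be the three long diagonals (the other dihedral class leaves only four perfect matchings in $C_6\cup M$, too few for five distinct blues), and that the five blue matchings are then uniquely determined --- and only afterwards choose coordinates by sending the two ``alternating-triple'' blues $b_4,b_5$ to infinity. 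This yields the very symmetric system $g+h=gh$, $gh=1$, $g+h=1$, hence $z^2-z+1=0$ with negative discriminant.

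What each buys: the paper's argument is quicker to set up and leans on a geometric symmetry that only becomes visible in the chosen chart; yours front-loads more case analysis but arrives at a single explicit incidence pattern and a cleaner final elimination, and it makes transparent that exactly one combinatorial type survives before any coordinates are chosen. Both detect the same projective obstruction via a quadratic with no real root.
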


\begin{proof}
Suppose for contradiction that $s_{2,2}=6$ and $s_{2,1}=3$.
This gives $30$ bichromatic pairs, so there can be no more bichromatic lines.
This implies that every blue point is on three lines containing two red points. 
Label the points $r_1, \ldots, r_6$ and $b_1,\ldots, b_5$.
Suppose $\{ r_5,r_6,b_1, b_2\}$ lie on a line $L$.
Since $b_1$ is collinear with two pairs in $\{ r_1, r_2, r_3, r_4\}$, this set is in general position.
Hence by Proposition~\ref{projsquare} we may assume that they are the vertices of a square, with coordinates $(-1,1),(1,1), (-1,-1)$ and $(1,-1)$ respectively, as shown in Figure~\ref{66pic}.
Since $b_2$ is also collinear with two red pairs in $\{ r_1, r_2, r_3, r_4 \} $, we may also assume, without loss of generality, that\footnote{This is the point at infinity in the direction of the $x$-axis.}  $b_1 = \overline{r_1r_2} \cap \overline{r_3r_4}= (\infty,0)$ and $b_2 = \overline{r_1r_4}\cap \overline{r_2r_3}= (0,0)$.

\begin{figure}
\bc
\includegraphics{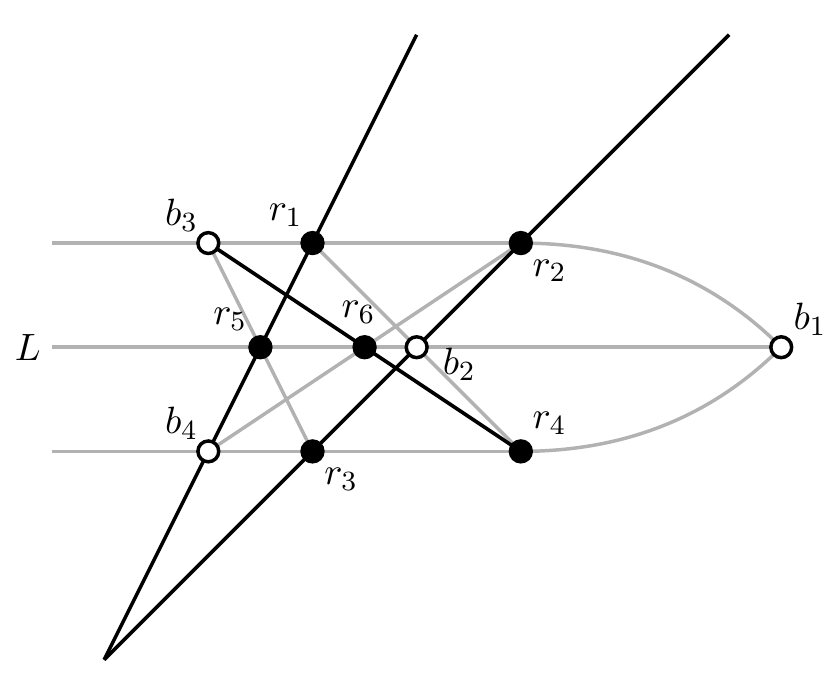}
\ec
\caption{Construction for Proposition~\ref{66prop}.}
\label{66pic}
\end{figure}

There is another blue point on the line $\overline{r_1r_2}$ (with equation $y=1$), say $b_3$, and a further blue point on $\overline{r_3r_4}$ (with equation $y=-1$), say $b_4$.
The position of either $b_3$ or $b_4$ determines the set $\{r_5,r_6\}$.
That is, $\{r_5,r_6\}=\{L \cap \overline{b_3r_3},L \cap \overline{b_3r_4}\}=\{L \cap \overline{b_4r_1}, L \cap \overline{b_4r_2} \}$.
Since the configuration described thus far is symmetric about the line $y=0$, it follows that 
if $b_3=(a,1)$ for some real number $a$, then $b_4=(a,-1)$.

At this stage there are six bichromatic lines with only one blue point: $\overline{r_1r_4},$ $\overline{r_4r_6}, \overline{r_6r_2}, \overline{r_2r_3}, \overline{r_3r_5}$ and $\overline{r_5r_1}$.
There is one blue point $b_5$ left to determine, and it must lie on three of these lines.
Note that the bichromatic lines form a cycle on the blue points in the order listed.
Neighbours in the cycle share a red point, so cannot share a blue point, and so $b_5$ lies on alternating lines in the cycle.
By symmetry in the line $y=0$, we may assume $b_5$ lies on $\overline{r_2r_3},\overline{r_4r_6}$ and $\overline{r_5r_1}$.

Since $\overline{r_2r_3}$ is the line $x=y$, we can say that $b_5=(c,c)$ for some real number\footnote{The point $b_5$ could also be at infinity on $\overline{r_2r_3}$. This case is easily excluded by inspection since both $\overline{r_4r_6}$ and $\overline{r_5r_1}$ would need to be parallel to $\overline{r_2r_3}$. There is no value of $a$ that achieves this.} $c$.
Since $b_5$ lies on $\overline{r_5r_1}=\overline{b_4r_1}$, we have $$ (c,c) = \lambda(a,-1) + (1-\lambda)(-1,1) $$ for some parameter $\lambda$.
Eliminating $\lambda$ from these two equations yields $$ ac = a-1-3c \enspace .$$ 
Similarly, since $b_5$ lies on $\overline{r_4r_6}=\overline{b_3r_4}$, we have $$ (c,c) = \gamma(a,1) + (1-\gamma)(1,-1) $$ for some parameter $\gamma$.
Eliminating $\gamma$ from these two equations yields $$ac=3c-a-1 \enspace .$$
Equating both expressions for $ac$ yields $a =3c$, and substituting this into the above equation yields $3c^2 = -1$. This contradiction concludes the proof. 
\end{proof}






\subsection*{Acknowledgements} I would like to thank Brendan McKay for some fruitful discussions. 

\bibliography{minbib}

\bibliographystyle{plain} 

\end{document}